\theoremstyle{plain}
\newtheorem{theorem}{Theorem}%[section]
\newtheorem{lemma}[theorem]{Lemma}
\theoremstyle{definition}
\theoremstyle{remark}
\def\bq{\begin{eqnarray}}
\def\eq{\end{eqnarray}}
\def\bqq{\begin{eqnarray*}}
\def\eqq{\end{eqnarray*}}
\def\nn{\nonumber}
\def\eps{\varepsilon}
\def\R{\mathbb{R}}
\def\cE {\mathcal{E}}
\def \d {{\rm d}}
\title{Ground state solution of a Kirchhoff type equation with singular potentials}
\author{Thanh Viet Phan \footnote{Institute of Fundamental and Applied Sciences, Duy Tan University, Ho Chi Minh City 700000, Vietnam} \footnote{Faculty of Natural Sciences, Duy Tan University, Danang City 550000, Vietnam.} \footnote{Email: phanthanhviet4@duytan.edu.vn}
%\normalsize{Institute of Fundamental and Applied Sciences, } \\
%\normalsize{Duy Tan University, Ho Chi Minh City 700000, Vietnam;}\\
%\normalsize{Faculty of Natural Sciences,}\\
%\normalsize{Duy Tan University, Danang City 550000, Vietnam}\\
%\normalsize{phanthanhviet4@duytan.edu.vn} 
}
\date{\normalsize\today}
\begin{document}

\maketitle

%%%%%%%%%%%%%%%%%%%%%%%%%%%%%%%%%
%%%%%%%%%%%%%%%%%%%%%%%%%%%%%%%%%

\begin{abstract}  
We study the existence and blow-up behavior of minimizers for 
\bqq
E(b)=\inf\Big\{\cE_b(u) \,|\, u\in H^1(\R^2), \|u\|_{L^2}=1\Big\},
\eqq
here $\cE_b(u)$ is the Kirchhoff energy functional defined by
$$
\cE_b(u)= \int_{\R^2} |\nabla u|^2 \d x + b \left( \int_{\R^2} |\nabla u|^2\d x \right)^2 + \int_{\R^2} V(x) |u(x)|^2 \d x - \frac{a}{2} \int_{\R^2} |u|^4\d x,
$$
where $a>0$ and $b>0$ are constants. When $V(x)= -|x|^{-p}$ with $0<p<2$, we prove that the problem has (at least) a minimizer that is non-negative and 
radially symmetric decreasing.  For $a\ge a^*$ (where $a^*$ is the optimal constant in the Gagliardo-Nirenberg inequality), we get the behavior of $E(b)$ when $b\to$ $0^+$. Moreover, for the case $a=a^*$, we analyze the details of the behavior of the minimizers $u_b$ when $b\to 0^+$.
\bigskip

\noindent {\bf MSC:}  35Q40; 46N50. 
   
\noindent {\bf Keywords:} Bose-Einstein condensate,  Gross-Pitaevskii functional, Kirchhoff equation, blow-up profile, Gagliardo-Nirenberg inequality.
\end{abstract}

%%%%%%%%%%%%%%%%%%%%%%%%%%%%%%%%
%%%%%%%%%%%%%%%%%%%%%%%%%%%%%%%%

\section{Introduction}

Let $a>0$ and $b>0$ are constants . Consider the minimization problem 
\bq \label{eq:GP}
E(b)=\inf\Big\{\cE_b(u) \,|\, u\in H^1(\R^2), \|u\|_{L^2}=1\Big\},
\eq
where $\cE_b(u)$ is the Kirchhoff energy functional defined by
\bq\label{eq:KC}
\cE_b(u)= \int_{\R^2} |\nabla u|^2\d x  + b \left( \int_{\R^2} |\nabla u|^2\d x \right)^2 + \int_{\R^2} V(x) |u(x)|^2 \d x - \frac{a}{2} \int_{\R^2} |u|^4\d x.
\eq
Since $|\nabla u|\ge |\nabla|u||$ pointwise, in consideration of \eqref{eq:GP} we can always restrict to $u\ge 0$. 

\subsection{The Gross-Pitaevskii theory} If $b=0$, then \eqref{eq:KC} becomes the 2D Gross-Pitaevskii energy functional
\bq\label{eq:GP1}
\cE_0(u)= \int_{\R^2} |\nabla u|^2\d x  + \int_{\R^2} V(x) |u(x)|^2 \d x - \frac{a}{2} \int_{\R^2} |u|^4\d x.
\eq
We define the GP energy to be 
\bqq 
E(0)=\inf\Big\{\cE_0(u) \,|\, u\in H^1(\R^2), \|u\|_{L^2}=1\Big\}.
\eqq

The emergence of the Gross-Pitaevskii  energy functional from Schr\"odinger quantum mechanics is well-known, see e.g. \cite{LewNamRou-15}. Moreover, the 
Gross-Pitaevskii energy functional is important in the study of Bose-Einstein condensates (BEC), see e.g \cite{BraSacTolHul-95,KagMurShl-98,SacStoHul-98}.  For this reason, problem \eqref{eq:GP} with $b=0$ has received a lot of interest in mathematics in recent years. 

When $V=0$, by defining $u_\ell(x)=\ell u(\ell x)$, we have the scaling property
\bqq
\cE_0(u_\ell)=\ell^2 \cE_0(u),~~ \forall \ell>0. 
\eqq
Thus $E(0)=-\infty$ if $a>a^*$ and $E(0)=0$ if $a\le a^*$. Here $a^*>0$ is the optimal constant in the Gagliardo-Nirenberg inequality:
\bq 
\label{eq:GN} 
\int_{\R^2} |\nabla u(x)|^2 \d x \ge \frac{a^*}{2} \int_{\R^2} |u(x)|^4  \d x, \quad \forall u\in H^1(\R^2), \|u\|_{L^2}=1.
\eq
It is well-known that (see e.g. \cite{GidNiNir-81,Weinstein-83,MclSer-87})
\bq \label{eq:a*-Q}
a^*=\int_{\R^2} |Q|^2 \d x =\int_{\R^2} |\nabla Q|^2\d x=\frac{1}{2}\int_{\R^2} |Q|^4\d x,
\eq
where $Q\in H^1(\R^2)$ is the positive solution to the nonlinear Schr\"odinger equation \bq \label{eq:Q}
-\Delta Q + Q - Q^3 =0, \quad Q\in H^1(\R^2).
\eq 
Moreover, $Q$ is unique (up to translations and dilations) and it can be chosen to be radially symmetric decreasing. Therefore, when $V=0$, $E(0)$ has minimizers if and only if $a=a^*$, and all minimizers are of the form $\beta Q_0 (\beta x-x_0)$ with $Q_0=Q/\|Q\|_{L^2}$, $\beta >0$ and $x_0\in \R^2$.

When $V\ne 0$, the situation changes crucially.  By changing the property of the potential $V(x)$, the existence and blow-up behavior of minimizers for $E(0)$ have been studied in many
works. In the case of trapping potential, i.e. 
\bqq
V\in L^{\infty}_{loc}(\R^2), \lim_{|x|\to \infty}V(x)=\infty\text{ and }\inf_{x\in \R^2}V(x)= 0.
\eqq
In \cite{GuoSei-14}, Guo and Seiringer showed that $E(0)$ has a minimizer if and only if $a<a^*$. Moreover, they prove that if
$$V (x) = h(x) \prod_{j=1}^J |x-x_j|^{p_j}, \quad 0<C^{-1}\le h(x) \le C, $$
then when $a\uparrow a^*$, up to subsequences of $\{u_a\}$, there exists $i_0\in \{1,2,...,J\}$ such that 
$$p_{i_0}=\max \{p_j: 1\le j\le J \} , \quad h(x_{i_0})=\min\{h(x_j):p_j=p_{i_0}\}$$
and  
\bqq 
\lim_{a\uparrow a^*} \eps_a u_a(x_{i_0}+\eps_a x) = \beta Q_0(\beta x)
\eqq
strongly in $L^q(\R^2)$ for all $q\in [2,\infty)$, where 
\bqq
\eps_a=(a^*-a)^{1/(p_{i_0}+2)}~~ and ~~  
\beta =  \left( \frac{p_{i_0} h(x_{i_0})}{2} \int_{\R^2} |x|^p |Q(x)|^2 \d x \right)^{1/(p_{i_0}+2)} .
\eqq

Thus if $V(x)$ behaves as a homogeneous potential, at least locally, it is possible to extract the details of the blow-up profile when $a$ tends to $a^*$. This result has been extended to other kinds of trapping potentials \cite{DenGuoLu-15,GuoWangZengZhou-15,GuoZenZho-16}, periodic potentials \cite{WanZha-17}, general bounded potentials \cite{Phan-19} and singular potentials \cite{Phan-17}.
 
In particular, in \cite{Phan-17}, we prove that if $V$ is the singular potential, i.e.
\bqq
V(x)=-\frac{1}{|x|^p},~~ 0<p<2,
\eqq
then $E(0)$ has a minimizer if $a\in (a_*,a^*)$ for some constant $a_*<a^*$. Moreover, a blow-up result similar to \cite{GuoSei-14} holds true, but the concentration of the minimizer at the singular point of $V$ results a huge  cancellation between the kinetic and potential energies, making the analysis more challenging.

\subsection{The Kirchhoff theory} 
Now we turn to the case when $b>0$, which is the main focus of the present paper. When $b>0$, the minimization problem \eqref{eq:GP}  arises in studying the Kirchhoff equation
\bqq
-\left(1+b\int_{\R^2}|\nabla u|^2 dx \right) \Delta u+V(x)u=a|u|^2 u +\mu u, ~~ x\in \R^2, \mu\in \R.
\eqq
The appearance of the nonlocal term $\left( \int_{\R^2} |\nabla u|^2\d x \right)^2$ in the energy functional \eqref{eq:KC}, which causes some mathematical difficulties, has attracted so much attention, see e.g. \cite{LiYe-19,GuoZho-21,HuTan-21} and the references therein.

In \cite{GuoZhaZho-18}, Guo, Zhang and Zhou proved that if $V (x)$ is the trapping potential satisfying
\bq \label{eq:VIn}
0\ne V\in C(\R^2), \lim_{|x|\to \infty}V(x)=\infty\text{ and }\inf_{x\in \R^2}V(x)= 0,
\eq
then $E(b)$ has a minimizer for all $a> 0$ and $b>0$. Thus when $a\ge a^*$, the existence of minimizers changes completely when $b>0$ (in comparison to the result in \cite{GuoSei-14} that $E(0)$ has no minimizer for all $a\ge a^*$). The next natural question is what would happen for the minimizer $u_b$ of $E(b)$ if $b\to 0^+$ and $a\ge a^*$ ? 

In a very recent paper, with the condition $V$ is satisfied \eqref{eq:VIn}, Guo and Zhou \cite{GuoZho-21} analyzed the blow-up behavior of the minimizer for problem \eqref{eq:GP} as $b\to 0^+$ when $a\ge a^*$. Furthermore, under some homogeneous conditions about $V(x)$, they prove that, for $a\ge a^*$, there exists a unique nonnegative minimizer for $E(b)$ as $b > 0$ being small enough.

\subsection{Main results}

In the present paper, we are interested in the minimization problem \eqref{eq:GP}, with $b>0$, in the case of singular potentials which are unbounded from below, e.g.

\bq\label{eq:V}
V(x)=-\frac{1}{|x|^p},~~ \text{for some constant }0<p<2 .
\eq

In the special case $p=1$, this singular potential corresponds to the gravitational attraction or the Coulombic attraction. The case of singular potential is interesting because of the instability of $E(b)$ when $b \to 0^+$  which is not observed in the case of trapping potentials.

First, we have a result on the existence of minimizers.

\begin{theorem}[Existence] \label{thm:1} Let $V$ as in 
\eqref{eq:V}. Then for any $b>0$ and $a>0$, the problem \eqref{eq:GP} has (at least) a minimizer that is non-negative and radially symmetric decreasing.
\end{theorem}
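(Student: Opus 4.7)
My plan is to apply the direct method of the calculus of variations combined with Schwarz symmetrization; the main obstacle will be ruling out loss of mass in the weak limit of a minimizing sequence.

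\textbf{Coercivity and symmetrization.} By the Gagliardo--Nirenberg inequality \eqref{eq:GN}, $\frac{a}{2}\int|u|^4\,\d x\le \frac{a}{a^*}\|\nabla u\|_{L^2}^2$ for $\|u\|_{L^2}=1$. For the singular term, Hölder's inequality with an exponent pair $(q,q')$ satisfying $pq'<2$ (possible since $p<2$), together with the Gagliardo--Nirenberg-type embedding $\|u\|_{L^{2q}}\le C\|\nabla u\|_{L^2}^{1-1/q}$, gives, for each $\eps>0$, a constant $C_\eps$ such that $\int|x|^{-p}|u|^2\,\d x\le \eps\|\nabla u\|_{L^2}^2+C_\eps$. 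Consequently,
\[
\cE_b(u)\ge(1-a/a^*-\eps)\|\nabla u\|_{L^2}^2+b\|\nabla u\|_{L^2}^4-C_\eps,
\]
so the Kirchhoff quartic term makes $\cE_b$ bounded below for every $a,b>0$, and any minimizing sequence is bounded in $H^1(\R^2)$. Since $|\nabla u|\ge|\nabla|u||$ and $-V=|x|^{-p}$ is radial and decreasing, Pólya--Szegő, equimeasurability of $L^p$ norms, and the Hardy--Littlewood rearrangement inequality yield $\cE_b(u^*)\le\cE_b(u)$ for the Schwarz rearrangement. Hence we may pick a minimizing sequence $(u_n)$ of nonnegative, radially symmetric decreasing functions.

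\textbf{Weak compactness and convergence of the lower-order terms.} Extract $u_n\wto u_0$ in $H^1(\R^2)$. By Strauss' compactness (radial $H^1(\R^2)\hookrightarrow\hookrightarrow L^q(\R^2)$ for $q\in(2,\infty)$), $u_n\to u_0$ strongly in $L^q$ for every $q>2$, so in particular $\int|u_n|^4\to\int|u_0|^4$. For the singular potential, I split $\R^2=B_\eps\cup(B_R\setminus B_\eps)\cup(\R^2\setminus B_R)$: on $B_\eps$ the Hölder/embedding estimate above controls the integral uniformly in $n$ (and it vanishes as $\eps\to 0$); on the annulus $|x|^{-p}$ is bounded and the local Rellich theorem gives $L^2$ convergence; on the exterior $|x|^{-p}\le R^{-p}$ while $\|u_n\|_{L^2}=1$. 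Passing $n\to\infty$ first, then $\eps\to 0$ and $R\to\infty$, I obtain $\int Vu_n^2\,\d x\to \int Vu_0^2\,\d x$.

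\textbf{No mass loss and conclusion.} First, $E(b)<0$: the trial function $\phi_R(x)=R^{-1}\phi(x/R)$ with a fixed radial bump $\phi$ of unit $L^2$ norm satisfies $\cE_b(\phi_R)\sim -R^{-p}\int|y|^{-p}\phi^2\,\d y$ as $R\to\infty$, the potential contribution decaying as $R^{-p}$ while the kinetic, Kirchhoff, and nonlinear contributions decay as $R^{-2}$ or faster (and $p<2$). If $u_0\equiv 0$, the convergences above give $\liminf\cE_b(u_n)\ge 0>E(b)$, a contradiction. If $c:=\|u_0\|_{L^2}^2\in(0,1)$, consider $\tilde u(x)=u_0(\sqrt c\,x)$, which has $\|\tilde u\|_{L^2}=1$. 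A direct change of variables yields $\|\nabla\tilde u\|_{L^2}=\|\nabla u_0\|_{L^2}$ together with
\[
\int V\tilde u^2\,\d x=c^{p/2-1}\int Vu_0^2\,\d x,\qquad \int|\tilde u|^4\,\d x=c^{-1}\int|u_0|^4\,\d x.
\]
Since $u_0\not\equiv 0$ forces $\int Vu_0^2<0$ and $\int|u_0|^4>0$, while $c^{p/2-1}>1$ and $c^{-1}>1$, this gives $\cE_b(\tilde u)<\cE_b(u_0)$. Combined with $\cE_b(u_0)\le E(b)$ (weak lsc) and $\cE_b(\tilde u)\ge E(b)$, we get $E(b)\le\cE_b(\tilde u)<\cE_b(u_0)\le E(b)$, a contradiction. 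Hence $\|u_0\|_{L^2}=1$. The weak lower semicontinuity of $\|\nabla u\|_{L^2}^2$ (and of its square) together with the strong convergences yield $\cE_b(u_0)\le\liminf\cE_b(u_n)=E(b)$, so $u_0$ is the desired minimizer; nonnegativity and radial monotonicity are inherited from $(u_n)$ via pointwise a.e.\ convergence.
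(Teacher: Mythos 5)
Your proposal is correct and follows essentially the same route as the paper: coercivity by absorbing the singular potential into an $\eps$-fraction of the kinetic energy plus the Kirchhoff term, Schwarz symmetrization to obtain a radial decreasing minimizing sequence, Strauss compactness, strict negativity of $E(b)$ via a spread-out trial function, and the mass-dilation argument $\tilde u(x)=u_0(\sqrt{c}\,x)$ to exclude $\|u_0\|_{L^2}<1$. The only (immaterial) differences are that the paper derives the $\eps$-bound on $\int |x|^{-p}|u|^2$ by a scaling argument from a fixed-constant version, passes to the limit in the potential term via weak $L^s$ convergence of $|u_n|^2$ against $V\in L^{1/p+1/2}+L^{4/p}$ rather than your three-region splitting, and uses $\tau Q_0(\tau x)$ as the trial family.
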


In the case $b=0$, the result in \cite[Theorem 1]{Phan-17} concludes that $E(b)$ has a minimizer if $a<a^*$ and $a$ is close to $a^*$ sufficiently. The reason is that most of the kinetic energy is canceled by the interaction energy, and hence the condensate is trapped by any small negative well. However, the technique of the proof in \cite[Theorem 1]{Phan-17} cannot be applied to the case $b>0$ from the appearance of the nonlocal term $\left( \int_{\R^2} |\nabla u|^2\d x \right)^2$.  To overcome this difficulty, we first prove $\cE_b(u) \ge \cE_b(u^*)$, where $u^*$ is the symmetric-decreasing rearrangement of $u$. In this place, we use the assumption that $V$ is radially symmetric decreasing to simplify the analysis. Therefore, from the fact that $E(b)>-\infty$, we can find a minimizing sequence $\{u_n\}$ such that $u_n=u_n^*$ , i.e $u_n$ is radially symmetric decreasing. Finally, we use the compact embedding result due to Strauss \cite{STRAUSS} (see also \cite{BerLio-83}) to get the conclusion.

Comparing this result with the result in \cite[Lemma 3]{Phan-17} that $E(b)$ has no minimizer if $b=0$ and $a\ge a^*$, a natural question is what would happen for $E(b)$ and the minimizers $u_b$ of $E(b)$ when $b\to 0^+$ and $a\ge a^*$ ? 

In the critical case $a=a^*$, the behavior of the energy $E(b)$ and the corresponding minimizer $u_b$ when $b\to 0^+$ is given below.

\begin{theorem}[Blow-up] \label{thm:2} Let $V$ as in \eqref{eq:V}. Suppose that $a=a^*$. Let $u_b$ be the minimizer for $E(b)$. We have

\bqq
\lim_{b\to 0} b^{p/(4-p)}E(b)=\left(\int_{\R^2}\frac{|Q_0(x)|^2}{|x|^p} \d x\right)^{4/(4-p)}\left[\left(\frac{p}{4}\right)^{4/(4-p)}-\left(\frac{p}{4}\right)^{p/(4-p)}\right].
\eqq
Moreover, for every sequence $b_n\to 0^+$, there exists a subsequence (still denoted by $b_n$) such that
\bqq
\lim_{n\to \infty}\eps_n u_{b_n}\left(\eps_n x\right)=\beta Q_0(\beta x)
\eqq
strongly in $H^1(\R^2)$, where 
\bqq
\eps_n = b_n^{1/(4-p)} ~~ and ~~ \beta= \left(\frac{p}{4} \int_{\R^2} \frac{|Q_0(x)|^2}{|x|^p} \d x\right)^{1/(4-p)}.
\eqq
\end{theorem}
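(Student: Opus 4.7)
The plan is to combine a sharp trial-function upper bound with a concentration-compactness analysis of a suitably rescaled sequence of minimizers. Write $\lambda:=\int_{\R^2}|Q_0(x)|^2|x|^{-p}\d x$. For the upper bound I test with $v_\beta(x)=\beta Q_0(\beta x)$ for a free parameter $\beta>0$, which has unit $L^2$-norm. The identities $\|\nabla Q_0\|_{L^2}^2=1=\tfrac{a^*}{2}\|Q_0\|_{L^4}^4$ extracted from \eqref{eq:a*-Q} make the Gagliardo--Nirenberg contribution to $\cE_b(v_\beta)$ vanish, so $\cE_b(v_\beta)=b\beta^4-\lambda\beta^p$; the one-variable optimum at $\beta^{4-p}=p\lambda/(4b)$ gives, after multiplying by $b^{p/(4-p)}$, exactly the claimed limit $L$. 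Hence $\limsup_{b\to 0^+}b^{p/(4-p)}E(b)\le L<0$.

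For the lower bound and the profile I introduce the anticipated scale $\epsilon_b=b^{1/(4-p)}$ and $w_b(x):=\epsilon_b u_b(\epsilon_b x)$, so that $\|w_b\|_{L^2}=1$ and, by Theorem~\ref{thm:1}, $w_b$ is radial and decreasing. A direct rescaling yields
\begin{equation*}
b^{p/(4-p)}\cE_b(u_b)=b^{-(2-p)/(4-p)}\delta_b+B_b,
\end{equation*}
where $\delta_b:=\|\nabla w_b\|_{L^2}^2-\tfrac{a^*}{2}\|w_b\|_{L^4}^4\ge 0$ by \eqref{eq:GN} and $B_b:=\|\nabla w_b\|_{L^2}^4-\int_{\R^2}|x|^{-p}|w_b|^2\d x$. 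A soft Hardy--Gagliardo--Nirenberg inequality $\int|x|^{-p}|w|^2\le C\|\nabla w\|_{L^2}^p\|w\|_{L^2}^{2-p}$ combined with the upper bound $b^{p/(4-p)}E(b)\le L$ confines $\|\nabla w_b\|_{L^2}^2$ to a compact interval $[c_1,c_2]\subset(0,\infty)$, and the divergence of the prefactor $b^{-(2-p)/(4-p)}$ forces $\delta_b\to 0$.

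By Strauss's compact embedding $H^1_{\mathrm{rad}}(\R^2)\hookrightarrow L^q(\R^2)$ for $q\in(2,\infty)$, I extract a subsequence with $w_b\wto w_*$ weakly in $H^1$ and $w_b\to w_*$ strongly in $L^4$. Passing $\delta_b\to 0$ to the limit, combined with weak lower semicontinuity of the gradient and the scale-invariant form of \eqref{eq:GN} applied to $w_*$, gives $\|\nabla w_*\|_{L^2}^2=\tfrac{a^*}{2}\|w_*\|_{L^4}^4$, equality in \eqref{eq:GN}, $\|w_*\|_{L^2}=1$, and $\|\nabla w_b\|_{L^2}\to\|\nabla w_*\|_{L^2}$. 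The strict negativity $\limsup B_b\le L<0$ rules out $w_*\equiv 0$, and the rigidity of equality in \eqref{eq:GN} then forces $w_*=cQ_0(c\cdot)$ for some $c>0$. This upgrades the convergence to strong in $H^1$, and a Hardy--Gagliardo--Nirenberg estimate applied to $w_b-w_*$ gives $\int|x|^{-p}|w_b|^2\d x\to c^p\lambda$, so $B_b\to c^4-c^p\lambda$. Chaining
\begin{equation*}
c^4-c^p\lambda\le\liminf_{b\to 0^+}b^{p/(4-p)}E(b)\le\limsup_{b\to 0^+}b^{p/(4-p)}E(b)\le L=\min_{t>0}(t^4-t^p\lambda)
\end{equation*}
forces $c$ to be the unique minimizer $c=\beta=(p\lambda/4)^{1/(4-p)}$, pinning down both the sharp limit and the blow-up profile.

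The main obstacle is that a direct Hardy--Gagliardo--Nirenberg lower bound on $B_b$ is not sharp, since $Q_0$ does not maximize $\int|x|^{-p}|w|^2$ among $\|w\|_{L^2}=\|\nabla w\|_{L^2}=1$. The argument circumvents this by using the rigidity of equality in \eqref{eq:GN} to force $w_*$ to be an exact rescaled $Q_0$ with full $L^2$-mass; the correct scaling $c=\beta$ then emerges from a one-variable minimization rather than from a sharp functional inequality.
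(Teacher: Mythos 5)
Your proposal is correct, and it follows the paper's overall strategy — the trial function $\beta Q_0(\beta\,\cdot)$ for the sharp upper bound, the rescaling $w_b=\eps_b u_b(\eps_b\,\cdot)$ with $\eps_b=b^{1/(4-p)}$, the vanishing of the Gagliardo--Nirenberg deficit, identification of the limit through rigidity of equality in \eqref{eq:GN}, and matching against the upper bound to pin down $\beta$ — but the compactness step is genuinely different. The paper never uses radial symmetry of $u_b$: it extracts a weak $H^1$ limit, runs a Brezis--Lieb splitting of both $\|\nabla w_b\|_{L^2}^2$ and $\|w_b\|_{L^4}^4$, applies \eqref{eq:GN} separately to $w$ and to $w_b-w$ to force $\|w\|_{L^2}=1$ and strong $H^1$ convergence, and only afterwards eliminates the translation parameter $x_0$ in $w=\beta Q_0(\beta\,\cdot-x_0)$ by a rearrangement inequality. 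You instead take $u_b$ to be the radial decreasing minimizer of Theorem \ref{thm:1}, use Strauss's compact embedding to get strong $L^4$ convergence, and close with the chain $\|\nabla w_*\|_{L^2}^2\le\lim\|\nabla w_b\|_{L^2}^2=\frac{a^*}{2}\|w_*\|_{L^4}^4\le\|\nabla w_*\|_{L^2}^2\|w_*\|_{L^2}^2\le\|\nabla w_*\|_{L^2}^2$, radiality making $x_0=0$ automatic; your multiplicative bound $\int|x|^{-p}|w|^2\le C\|\nabla w\|_{L^2}^p\|w\|_{L^2}^{2-p}$ is precisely the optimized form of the paper's additive Lemma \ref{lem:4} and replaces the rough estimates of Lemma \ref{lem:ev}. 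Two caveats. First, Theorem \ref{thm:1} only asserts the \emph{existence} of a radial decreasing minimizer, not that every minimizer is radial, so your argument covers that particular minimizer; the paper's Brezis--Lieb plus rearrangement route covers an arbitrary one, and to match that generality you would need to reinstate the translation parameter and the rearrangement step. Second, the non-vanishing $w_*\not\equiv 0$ must be established \emph{before} the chain of inequalities is used to conclude $\|w_*\|_{L^2}=1$ (one divides by $\|\nabla w_*\|_{L^2}^2$); the mechanism you indicate — $\int|x|^{-p}|w_b|^2\ge|L|>0$ while strong $L^q$ convergence to zero for $q\in(2,\infty)$ would annihilate both the inner and outer parts of that integral — is the right one and should be placed first.
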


Finally, in the case $a>a^*$, although we are unable to determine the blow-up profile of the minimizer $u_b$, we can obtain the behavior of $E(b)$ as $b\to 0^+$ as follows.

\begin{theorem} \label{thm:3} Let $V$ as in \eqref{eq:V}. Suppose that $a>a^*$. We have
\bq
\lim_{b\to 0}  bE(b)=-\frac{1}{4}\left(\frac{a}{a^*}-1\right)^2.
\eq
\end{theorem}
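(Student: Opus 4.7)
The plan is to establish matching upper and lower bounds on $bE(b)$ and let $b\to 0^+$. Write $c = a/a^* - 1 > 0$ throughout.

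For the upper bound, I would use the explicit trial function $u_\ell(x) = \ell Q_0(\ell x)$ with $\ell>0$, which satisfies $\|u_\ell\|_{L^2}=1$. Using the identities $\int|\nabla Q_0|^2=1$ and $\tfrac{a^*}{2}\int|Q_0|^4 = 1$ derived from \eqref{eq:a*-Q}, a direct change of variables gives
$$
\cE_b(u_\ell) \;=\; \ell^2\left(1-\tfrac{a}{a^*}\right) + b\ell^4 - \ell^p M, \qquad M := \int_{\R^2}\frac{|Q_0(x)|^2}{|x|^p}\,\d x.
$$
Choosing $\ell^2 = c/(2b)$ minimizes the first two terms and produces $-c^2/(4b)$, while the potential contribution is $-M(c/(2b))^{p/2}$, of order $b^{-p/2}$. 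Since $p<2$, multiplying by $b$ gives $b\,\cE_b(u_\ell) = -c^2/4 + O(b^{1-p/2})$, so $\limsup_{b\to 0} bE(b) \le -c^2/4$.

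For the lower bound, start from the Gagliardo–Nirenberg inequality \eqref{eq:GN}, which yields $-\tfrac{a}{2}\int|u|^4 \ge -\tfrac{a}{a^*}\int|\nabla u|^2$ for any $\|u\|_{L^2}=1$. The key auxiliary estimate is a form bound for the singular potential: for every $\epsilon>0$ there exists $C_\epsilon>0$ such that
$$
\int_{\R^2} \frac{|u(x)|^2}{|x|^p}\,\d x \;\le\; \epsilon \int_{\R^2}|\nabla u|^2\,\d x + C_\epsilon, \qquad \|u\|_{L^2}=1.
$$
I would prove this by splitting $\R^2 = B_R \cup B_R^c$. On $B_R^c$ the potential is bounded by $R^{-p}$. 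On $B_R$, apply Hölder with exponents $r' > 2/(2-p)$ and $r = r'/(r'-1) < 2/p$, giving $\||x|^{-p}\|_{L^r(B_R)} \le C R^{2/r - p}$; combine this with the 2D Gagliardo–Nirenberg bound $\|u\|_{L^{2r'}}^2 \le C\|\nabla u\|_{L^2}^{2-2/r'}\|u\|_{L^2}^{2/r'}$ and optimize $R = \|\nabla u\|_{L^2}^{-1}$ to obtain $\int|u|^2/|x|^p \le C\|\nabla u\|_{L^2}^p$. Finally, Young's inequality $t^{p/2}\le \epsilon t + C_\epsilon$ (valid because $p<2$) converts the $p$-th power into the claimed form bound.

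Writing $t = \int|\nabla u|^2$, the two estimates combine to
$$
\cE_b(u) \;\ge\; (1 - \tfrac{a}{a^*} - \epsilon)\,t + b t^2 - C_\epsilon \;=\; -(c+\epsilon)\,t + b t^2 - C_\epsilon.
$$
Minimizing over $t\ge 0$ at $t^* = (c+\epsilon)/(2b)$ gives $E(b) \ge -(c+\epsilon)^2/(4b) - C_\epsilon$, hence $bE(b) \ge -(c+\epsilon)^2/4 - bC_\epsilon$. Letting first $b\to 0^+$ and then $\epsilon\to 0^+$ yields $\liminf_{b\to 0}bE(b) \ge -c^2/4$, matching the upper bound.

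The main technical obstacle is establishing the form bound for $|x|^{-p}$ with arbitrarily small relative bound in front of $\|\nabla u\|_{L^2}^2$; this is essentially the statement that $|x|^{-p}$ belongs to the Kato class on $\R^2$ for $p<2$, and the interpolation/Hölder argument above makes this quantitative. Everything else is bookkeeping with the explicit optimization in $t$, which is why the limit comes out as a clean expression in $a/a^*$ only and does not depend on $p$ (unlike the critical case $a=a^*$ in Theorem \ref{thm:2}, where the potential integral $M$ enters the leading order).
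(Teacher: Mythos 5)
Your proposal is correct and follows essentially the same route as the paper: the trial function $\ell Q_0(\ell x)$ for the upper bound (the paper simply drops the negative potential term rather than tracking its $O(b^{-p/2})$ size), and for the lower bound the combination of Gagliardo--Nirenberg with a form bound $\int |u|^2|x|^{-p} \le \eps\int|\nabla u|^2 + C_\eps$ followed by completing the square in $t=\int|\nabla u|^2$. The only difference is how that form bound is obtained --- you use H\"older on $B_R$ plus interpolation and Young's inequality, while the paper's Lemma \ref{lem:4} uses a level-set truncation with dominated convergence and a scaling argument --- but both yield the same estimate and the rest of the argument is identical.
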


Our proof of the blow-up results is based on a careful implementation of the energy method, in which the upper bound and lower bound of E(b) are done separately. In both directions (upper and lower bounds), the cancellation to the leading order of the kinetic and potential energies makes our analysis different from that of \cite{GuoZho-21}. The idea of treating singular potentials from \cite{Phan-17} will be helpful for us. However, due to the presence of the nonlocal term $\left( \int_{\R^2} |\nabla u|^2\d x \right)^2$ in the energy functional, we have to encounter more complicated and technical calculations than in \cite{Phan-17}. In particular, we have to deal with the limit $b\to 0^+$ instead of $a\to a^*$ which makes the analysis substantially different.

We will prove Theorem \ref{thm:1} in Section \ref{sec:thm1} and prove Theorems \ref{thm:2} and \ref{thm:3} in Section \ref{Blow-up behavior}.

\section{Existence of minimizers} \label{sec:thm1}
In this section we prove Theorem \ref{thm:1}. As a preliminary step, we have
\begin{lemma} \label{lem:4}Let $V(x)= -|x|^{-p}$ with $0<p<2$. For all $u\in H^1(\R^2)$ and $\|u\|_{L^2}=1$, we have
\bq\label{eq:7lem}
\eps\int_{\R^2} |\nabla u|^2+\int_{\R^2} V |u|^2 \ge -C_p\eps^{-p/(2-p)},~~\forall \eps>0,
\eq
where $C_p$ is a constant depending only on $p$.
\end{lemma}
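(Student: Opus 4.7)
The plan is to first establish the scale-invariant Hardy-type inequality
\begin{equation}\label{eq:plan-Hardy}
\int_{\R^2} \frac{|u|^2}{|x|^p}\, \d x \le K_p \,\|\nabla u\|_{L^2}^{p}, \qquad \|u\|_{L^2}=1,
\end{equation}
and then derive \eqref{eq:7lem} from \eqref{eq:plan-Hardy} by a one-line Young's inequality in the variable $\|\nabla u\|_{L^2}$.

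To prove \eqref{eq:plan-Hardy} I split the integral at a radius $R>0$ to be optimized later. On $\{|x|\ge R\}$ one simply uses $|x|^{-p}\le R^{-p}$ together with $\|u\|_{L^2}=1$, giving an $R^{-p}$ bound. On $\{|x|\le R\}$ I apply H\"older's inequality with an exponent $s\in(1,2/p)$ (so that $|x|^{-ps}$ is locally integrable in $\R^2$); this yields a factor $C_{p,s}R^{2/s-p}$ coming from $\||x|^{-p}\|_{L^s(B_R)}$ and a factor $\|u\|_{L^{2s'}(B_R)}^2$ on the remaining side. The 2D Gagliardo--Nirenberg inequality controls the latter by $C\|\nabla u\|_{L^2}^{2/s}$ (using $\|u\|_{L^2}=1$). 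Optimizing the resulting bound $R^{-p}+CR^{2/s-p}\|\nabla u\|_{L^2}^{2/s}$ by choosing $R\sim \|\nabla u\|_{L^2}^{-1}$ balances the two pieces and produces exactly $\|\nabla u\|_{L^2}^{p}$, as dictated by the scaling $u\mapsto \lambda u(\lambda\cdot)$.

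With \eqref{eq:plan-Hardy} in hand, I apply Young's inequality $a^pb^{2-p}\le \tfrac{p}{2}a^2+\tfrac{2-p}{2}b^2$ in the form
\begin{equation*}
K_p \|\nabla u\|_{L^2}^{p} \;=\; K_p\bigl(\eps\|\nabla u\|_{L^2}^2\bigr)^{p/2}\eps^{-p/2}
\;\le\; \eps \|\nabla u\|_{L^2}^2 + C_p \,\eps^{-p/(2-p)},
\end{equation*}
with a new constant $C_p$ depending only on $p$. Rearranging and recalling that $V=-|x|^{-p}$ gives \eqref{eq:7lem}.

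I expect the only delicate point to be the choice of the H\"older exponent $s$ on the inner ball: for $p<1$ one can naively take $s=2$, but for $p\in[1,2)$ one must keep $s<2/p$ so that $|x|^{-ps}$ is integrable near the origin, and the Gagliardo--Nirenberg factor must then be matched accordingly. Apart from this bookkeeping, everything else is a standard split-and-optimize argument, and the final Young step is mechanical.
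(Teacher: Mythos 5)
Your argument is correct, and it reorganizes the same basic ingredients (splitting the singular potential, H\"older, Sobolev/Gagliardo--Nirenberg) in a genuinely different way from the paper. The paper first proves the crude additive bound $\int|\nabla u|^2+\int V|u|^2\ge -C_p$ by splitting $|x|^{-p}$ at the fixed radius $1$ into an $L^{1/p+1/2}$ piece and an $L^{4/p}$ piece, absorbing each into $\frac12\|u\|_{H^1}^2$ via a level-set truncation and dominated convergence; it then obtains the $\eps$-dependence \emph{a posteriori} by the scaling $u_\ell(x)=\ell^{-1}u(\ell^{-1}x)$ with $\ell=\eps^{-1/(2-p)}$. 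You instead prove the scale-invariant Hardy-type bound $\int |x|^{-p}|u|^2\le K_p\|\nabla u\|_{L^2}^p$ directly, by splitting at a radius $R$ optimized against $\|\nabla u\|_{L^2}$, and then deduce the lemma by Young's inequality; your exponent bookkeeping ($s<2/p$ so that $|x|^{-ps}\in L^1_{loc}(\R^2)$, the factor $R^{2/s-p}$, the Gagliardo--Nirenberg exponent $2/s$, and the balance at $R\sim\|\nabla u\|_{L^2}^{-1}$) all checks out, and the Young step $K_pT^p\le\eps T^2+C_p\eps^{-p/(2-p)}$ is exactly the right conjugate-exponent computation. The two routes are logically equivalent --- optimizing your Hardy inequality over $\eps$ recovers it from the family \eqref{eq:7lem} and vice versa --- but yours isolates the sharp scale-invariant inequality as the central object and avoids the separate scaling step, which is arguably cleaner and more quantitative; the paper's version avoids having to track the precise power of $\|\nabla u\|_{L^2}$ by hiding it in the qualitative dominated-convergence argument. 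The only point to make explicit in a full write-up is that $\|\nabla u\|_{L^2}>0$ whenever $\|u\|_{L^2}=1$, so the choice $R=\|\nabla u\|_{L^2}^{-1}$ is always admissible.
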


\begin{proof}
We first show that
\bq\label{eq:cp}
\int_{\R^2} |\nabla u|^2+\int_{\R^2} V |u|^2 \ge -C_p,~~ \forall u\in H^1(\R^2) ,\|u\|_{L^2}=1.
\eq
Let $f=|x|^{-p}\chi_{\{|x|\le 1\}},$ $g=|x|^{-p}\chi_{\{|x|\ge 1\}}$, where $\chi_A$ is the characteristic function of the set $A\subset\R^2$. We have $f\in L^{1/p+1/2}$, $g\in L^{4/p}$ and $V=-(f+g)$.
Let $q=1/p+1/2$, we have $1<q<+\infty$ and $f\in L^q$. Let $q'\in (1,+\infty)$ such that $\frac{1}{q}+\frac{1}{q'}=1$. By  H\"older's and  Sobolev's inequalities , we have 
\begin{align*}
\int_{\R^2} f|u|^2 &=\int_{\R^2} f|u|^2\chi_{\{f\ge \lambda\}}+\int_{\R^2} f|u|^2\chi_{\{f\le \lambda\}}\\
&\le \left(\int_{\R^2}\left|f\chi_{\{f\ge \lambda\}}\right|^q\right)^{1/q}\left(\int_{\R^2}|u|^{2q'}\right)^{1/q'}+\lambda\\
&\le \left(\int_{\R^2}\left|f\chi_{\{f\ge \lambda\}}\right|^q\right)^{1/q}\|u\|_{H^1}^2+\lambda,
\end{align*}
where $\lambda>0$ is a constant.

By the dominated convergence theorem, we have $\left(\int_{\R^2}\left|f\chi_{\{f\ge \lambda\}}\right|^q\right)^{1/q}\to 0 
$ when $\lambda \to +\infty$. Thus from the above estimates, there exists a constant $C_p$ depending only on $p$ such that
\begin{align*}
\int_{\R^2} f|u|^2&\le \frac{1}{2}\|u\|_{H^1}^2+C_p\\
&=\frac{1}{2}\left(\int_{\R^2} |\nabla u|^2+1\right)+C_p.
\end{align*}
Similarly, we obtain that 
\bqq
\int_{\R^2} g|u|^2\le \frac{1}{2}\left(\int_{\R^2} |\nabla u|^2+1\right)+C_p.
\eqq
Therefore,
\bqq \int_{\R^2}|\nabla u|^2+\int_{\R^2} V |u|^2 = \int_{\R^2} |\nabla u|^2 - \int_{\R^2} f |u|^2 -\int_{\R^2} g |u|^2\ge -C_p.
\eqq
Now we prove the inequality \eqref{eq:7lem}. Let $u_\ell(x)=\ell^{-1}u(\ell^{-1}x)$, we have $u_\ell\in H^1(\R^2)$, $\|u_\ell\|_{L^2}=1$ and $u(x)=\ell u _\ell(\ell x)$. Thus

\bq\label{eq:8lem}
\eps\int_{\R^2} |\nabla u|^2+\int_{\R^2} V |u|^2 =\eps\ell^2\int_{\R^2} |\nabla u_\ell|^2+\ell^p\int_{\R^2} V |u_\ell|^2.
\eq
Using \eqref{eq:8lem} with $\ell=\eps^{-1/(2-p)}$ and the inequality \eqref{eq:cp}, we obtain
\bqq
\eps\int_{\R^2} |\nabla u|^2+\int_{\R^2} V |u|^2 =\eps^{-p/(2-p)}\left[\int_{\R^2} |\nabla u_\ell|^2+\int_{\R^2} V |u_\ell|^2\right]\ge -C_p\eps^{-p/(2-p)}.
\eqq
\end{proof}

\begin{proof}[Proof of Theorem \ref{thm:1}] From Lemma \ref{lem:4} and \eqref{eq:GN}, for all $u\in H^1(\R^2)$ and $\|u\|_{L^2}=1$, we have 
\bq\label{eq1:thm1}
 \cE_b(u) \ge b \left( \int_{\R^2} |\nabla u|^2 \right)^2 - \frac{a}{a^*} \int_{\R^2} |\nabla u|^2 -C_p.
\eq
Moreover, by AM-GM inequality, we have
\bq\label{eq2:thm2}
\frac{b}{2} \left( \int_{\R^2} |\nabla u|^2 \right)^2-\frac{a}{a^*} \int_{\R^2} |\nabla u|^2\ge -\frac{1}{2b}\left(\frac{a}{a^*}\right)^2.
\eq
Thus, \eqref{eq1:thm1} and \eqref{eq2:thm2} imply that
\bqq
 \cE_b(u)\ge \frac{b}{2} \left( \int_{\R^2} |\nabla u|^2 \right)^2-C_p-\frac{1}{2b}\left(\frac{a}{a^*}\right)^2.
\eqq
Therefore, $E(b)>-\infty$. Moreover, if $\{u_n\}$ is a minimizing sequence  for $E(b)$, then it is bounded in $H^1(\R^2)$. 

Recall that $Q_0=Q/\|Q\|_{L^2}$. From \eqref{eq:a*-Q}, we have
\bq \label{eq:Q0}
1=\int_{\R^2} |Q_0|^2 = \int_{\R^2}|\nabla Q_0|^2 = \frac{a^*}{2} \int_{\R^2}|Q_0|^4.
\eq

Let $v_\tau(x)=\tau Q_0(\tau x)$. From \eqref{eq:GN} and \eqref{eq:Q0}, we obtain  
\bqq
E(b)\le \cE_b(v_\tau)= \tau^p\left(b\tau^{4-p} - \tau^{2-p}\left(\frac{a}{a^*}-1\right) - \int_{\R^2}  \frac{|Q_0(x)|^2}{|x|^p} \d x\right). 
\eqq
Since this holds for all $\tau>0$, thus
\bq\label{thm1:tau}
E(b)\le \inf_{\tau>0}\left[ \tau^p\left(b\tau^{4-p} - \tau^{2-p}\left(\frac{a}{a^*}-1\right) - \int_{\R^2}  \frac{|Q_0(x)|^2}{|x|^p} \d x\right)\right]<0. 
\eq

We denote $f^*$ to be the symmetric-decreasing rearrangement of a non-negative measurable function $f$. As discussed, in consideration of \eqref{eq:GP}, we can always restrict to $u\ge 0$. By P\'olya-Szeg\"o  inequality and the well-know rearrangement inequalities, see e.g. \cite[Chapter 3]{LieLos-01}, we have
\bq\label{eq:rea1}
\int_{\R^2} |\nabla u|^2 \ge \int_{\R^2} |\nabla u^*|^2,
\eq
\bq \label{eq:rea2}
\int_{\R^2} \frac{|u|^2}{|x|^p}\le \int_{\R^2} \frac{(|u|^2)^*}{|x|^p}=\int_{\R^2} \frac{(u^*)^2}{|x|^p},
\eq
\bq\label{eq:rea3}
\|u\|_q=\|u^*\|_q,~~ \forall 1\le q < \infty.
\eq
Here in the second bound we used 
\bqq
(|u|^2)^*=(u^*)^2.
\eqq

From \eqref{eq:rea1},\eqref{eq:rea2},\eqref{eq:rea3} we have $\cE_b(u) \ge \cE_b(u^*)$, and hence we can find a minimizing sequence $\{u_n\}$ such that $u_n=u_n^*$ , i.e $u_n$ is radially symmetric decreasing.

Denote by $H^1_{radial}(\R^2)$ the subspace of $H^1(\R^2)$ formed by the radial functions. Since $u_n$ is bounded in $H^1_{radial}(\R^2)$, after passing to a subsequence if necessary, we can assume that $u_n \rightharpoonup  w $ weakly in $H^1_{radial}(\R^2)$. Moreover, it is well-known, see \cite{STRAUSS} or \cite[Theorem A.I']{BerLio-83}, that the embedding $H^1_{radial}(\R^2)\hookrightarrow  L^q(\R^2)$ is compact for all $2< q< +\infty$. Thus 
\bq\label{eq:strongLq}
u_n\to w \text{ strongly in $L^q (\R^2)$ for all $2< q< +\infty$}. 
\eq
Since $u_n$ is non-negative and radially symmetric decreasing, it follows that $w$ is also non-negative and radially symmetric decreasing. 

We will prove that $w$ is a minimizer for $E(b)$. 

Since $\nabla u_n \rightharpoonup \nabla w$ weakly in $L^2(\R^2)$, we have
\bqq
\int_{\R^2} |\nabla u_n|^2 = \int_{\R^2}|\nabla w|^2+\int_{\R^2}|\nabla (w-u_n)|^2+o(1)_{n\to \infty}.
\eqq
Moreover, since $|u_n|^2 \rightharpoonup |w|^2$ weakly in $L^s(\R^2)$ for all $1<s<\infty$ and 
\bqq
V(x)=-|x|^{-p}=-|x|^{-p}\chi_{\{|x|\le 1\}}-|x|^{-p}\chi_{\{|x|\ge 1\}}\in L^{1/p+1/2}+L^{4/p},
\eqq
we have
\bqq
\int_{\R^2} V|u_n|^2=\int_{\R^2} V|w|^2 +o(1)_{n\to \infty}.
\eqq
On the other hand, from \eqref{eq:strongLq}, we get
\bqq
\int_{\R^2} |u_n|^4=\int_{\R^2} |w|^4 +o(1)_{n\to \infty}.
\eqq
We conclude that 
\bq\label{theo1:eb}
E(b)=\liminf\limits_{n\to \infty} \cE_b(u_n)\ge \cE_b(w).
\eq
It remains to show that $\|w\|_{L^2}=1$. We have $\|w\|_{L^2}\le 1$ since $\|u_n\|_{L^2}=1$ and $u_n\rightharpoonup w$ weakly in $L^2(\R^2)$. Moreover, $w\ne 0$ since $E(b)<0$ by \eqref{thm1:tau}. Thus if we let  $\ell=\|w\|_{L^2}^2$ and $u_\ell(x)=w(\ell^{1/2}x)$, then $0<\ell\le 1$ and $\|u_\ell\| _{L^2}=1$. We can estimate
\begin{align}\label{theo1:uell}
E(b)\le \cE_b(u_{\ell})&=\int |\nabla w|^2 +b\left(\int |\nabla w|^2\right)^2 +\frac{1}{\ell^{1-p/2}}\int V(x)|w(x)|^2\d x -\frac{a}{2\ell}\int |w|^4\nn\\
&=\cE_b(w)+\left(\frac{1}{\ell^{1-p/2}}-1\right)\int V(x)|w(x)|^2\d x -\frac{a}{2}\left(\frac{1}{\ell}-1\right)\int |w|^4
\end{align}
It follows from \eqref{theo1:eb} and \eqref{theo1:uell} that 
\begin{align*}
0&\le \left(\frac{1}{\ell^{1-p/2}}-1\right)\int V(x)|w(x)|^2\d x -\frac{a}{2}\left(\frac{1}{\ell}-1\right)\int |w|^4\\
&=-\left(\frac{1}{\ell^{1-p/2}}-1\right)\int \frac{|w(x)|^2}{|x|^p}\d x -\frac{a}{2}\left(\frac{1}{\ell}-1\right)\int |w|^4.
\end{align*}
Combining this with $0<\ell\le 1$ and $0<p<2$, we conclude that $\ell=1$. Therefore, $w$ is a minimizer for $E(b)$. 
 \end{proof}
%%%%%%%%%%%%%%%%%%%%%%%%%%%%%%%%%%%%%%%%%%%%%%%%%%%%%%%
%%%%%%%%%%%%%%%%%%%%%%%%%%%%%%%%%%%%%%%%%%%%%%%%%%%%%%%
\section{Blow-up behavior}\label{Blow-up behavior}

%%%%%%%%%%%%%%%%%%%%%%%%%%%%
In this section we prove Theorem \ref{thm:2} and Theorem \ref{thm:3}. 

First, we prove the sharp upper bound on $E(b)$.

\begin{lemma}[Energy upper bound] \label{lem:Energy upper bound}
For any $b>0$, we have \bqq
bE(b)\le -\frac{1}{4}\left(\frac{a}{a^*}-1\right)^2, ~~ if~~ a>a^*
\eqq
and 
\bqq
b^{p/(4-p)}E(b)\le \inf_{\lambda>0}\left(\lambda^4-\lambda^p\int_{\R^2}\frac{|Q_0(x)|^2}{|x|^p}\d x\right), ~~ if ~~ a=a^*.
\eqq
\end{lemma}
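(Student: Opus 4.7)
The plan is to obtain both upper bounds from the single family of trial functions $v_\tau(x)=\tau Q_0(\tau x)$ that already appeared in the proof of Theorem~\ref{thm:1}. Since $\|Q_0\|_{L^2}=1$, a change of variables gives $\|v_\tau\|_{L^2}=1$, and using the identities \eqref{eq:a*-Q} I would compute
\begin{equation*}
\cE_b(v_\tau) \;=\; b\tau^{4}-\Bigl(\frac{a}{a^{*}}-1\Bigr)\tau^{2}-\tau^{p}\!\int_{\R^{2}}\frac{|Q_0(x)|^{2}}{|x|^{p}}\,\d x,
\end{equation*}
which is precisely the expression that already shows up in \eqref{thm1:tau}. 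Thus $E(b)$ is bounded above by this quantity for every $\tau>0$, and the two statements of the lemma will be obtained by choosing $\tau$ appropriately in the two regimes.

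For the case $a>a^{*}$, write $\alpha:=a/a^{*}-1>0$. Since $\tau^{p}\!\int |Q_0|^{2}/|x|^{p}\,\d x$ is nonnegative, dropping it still gives a valid upper bound; I would then minimise $b\tau^{4}-\alpha\tau^{2}$ over $\tau>0$. The critical point is $\tau^{2}=\alpha/(2b)$, where the value equals $-\alpha^{2}/(4b)$. This yields $E(b)\le -\tfrac{1}{4b}(a/a^{*}-1)^{2}$, i.e.\ the first inequality.

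For the case $a=a^{*}$, the $\tau^{2}$ term vanishes and
\begin{equation*}
E(b)\;\le\; b\tau^{4}-\tau^{p}\!\int_{\R^{2}}\frac{|Q_0|^{2}}{|x|^{p}}\,\d x.
\end{equation*}
To extract the correct scaling in $b$ I would substitute $\tau=\lambda b^{-1/(4-p)}$; a direct computation gives $b\tau^{4}=\lambda^{4}b^{-p/(4-p)}$ and $\tau^{p}=\lambda^{p}b^{-p/(4-p)}$, so
\begin{equation*}
b^{p/(4-p)}E(b)\;\le\; \lambda^{4}-\lambda^{p}\!\int_{\R^{2}}\frac{|Q_0|^{2}}{|x|^{p}}\,\d x\qquad\text{for all }\lambda>0.
\end{equation*}
Taking the infimum over $\lambda>0$ delivers the second claim.

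There is essentially no obstacle here: the key cancellation $\int|\nabla Q_0|^{2}=\tfrac{a^{*}}{2}\int|Q_0|^{4}$ is exactly what eliminates the $\tau^{2}$ term when $a=a^{*}$, and the only thing to be careful about is to keep track of signs when dropping the singular-potential contribution in the case $a>a^{*}$ (since $-\tau^{p}A\le 0$, discarding it weakens the bound in the right direction). The hardest aspect is purely notational, namely choosing the rescaling $\tau=\lambda b^{-1/(4-p)}$ so that the infimum on the right-hand side is independent of $b$; once this rescaling is in place, both inequalities follow by elementary calculus.
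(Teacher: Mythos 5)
Your proposal is correct and follows essentially the same route as the paper: the same trial functions $\tau Q_0(\tau x)$, the same identity reducing $\cE_b$ to $b\tau^4-(\tfrac{a}{a^*}-1)\tau^2-\tau^p\int|Q_0|^2|x|^{-p}\,\d x$, dropping the nonpositive potential term and optimizing in $\tau$ when $a>a^*$, and the rescaling $\tau=\lambda b^{-1/(4-p)}$ when $a=a^*$. No discrepancies to report.
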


\begin{proof} Let $u_\ell(x)=\ell Q_0(\ell x)$. From \eqref{eq:GN} and \eqref{eq:Q0}, we have 
\bq\label{eq1:Energy upper bound}
E(b)\le \cE_b(u_\ell)= b\ell^4 - \ell^2 \left(\frac{a}{a^*}-1\right) + \ell^p \int_{\R^2} V(x) |Q_0(x)|^2 \d x. 
\eq

$\bullet$ {\bf Case 1:} If $a>a^*$, then we can simply use $\int_{\R^2} V(x) |Q_0(x)|^2 \d x \le 0$ to find that 
$$
E(b) \le b\ell^4 -  \ell^2 \left(\frac{a}{a^*}-1\right). 
$$
Since this holds for all $\ell>0$, by optimizing over $\ell>0$ we obtain 
$$
E(b) \le \inf_{\ell>0} \Big[ b \ell^4 - \ell^2 \left(\frac{a}{a^*}-1\right)\Big] = -\frac{1}{4b} \left(\frac{a}{a^*}-1\right)^2 .
$$
Thus \bqq
bE(b)\le -\frac{1}{4}\left(\frac{a}{a^*}-1\right)^2.
\eqq

$\bullet$ {\bf Case 2:} If $a=a^*$, then from \eqref{eq1:Energy upper bound}, we have
$$
E(b) \le b\ell^4 - \ell^p \int_{\R^2} \frac{|Q_0(x)|^2}{ |x|^{p}} \d x.
$$
Since this holds for all $\ell>0$, we obtain
\begin{align*}
E(b) &\le \inf_{\ell>0} \Big( b \ell^4  - \ell^p \int_{\R^2} \frac{|Q_0(x)|^2}{ |x|^{p}} \d x \Big) \\
&= b^{-p/(4-p)} \inf_{\lambda >0} \Big( \lambda^4  -\lambda^p \int_{\R^2} \frac{|Q_0(x)|^2}{ |x|^{p}} \d x \Big). 
\end{align*}
Thus 
\bqq
b^{p/(4-p)}E(b)\le \inf_{\lambda>0}\left(\lambda^4-\lambda^p\int_{\R^2}\frac{|Q_0(x)|^2}{|x|^p}\d x\right).
\eqq
\end{proof}
%%%%%%%%%%%%%%%%%%%%%%%%%%%%%%%%%%%%%%%%%%%%%%%%%%%%%

In the following step, we provide some rough estimates for the total energy $E(b)$, as well as the kinetic and potential energies of the minimizer $u_b$. These bounds will serve as first inputs and will be improved later in the proof of Theorem \ref{thm:2}.   

\begin{lemma}[Rough energy estimates]\label{lem:ev} Let $u_b$ be a minimizer for $E(b)$. When $a=a^*$ we have
\bqq 
-C^{-1} b ^{-p/(4-p)}\ge E(b)\ge \int_{\R^2} V |u_b|^2 \ge -Cb^{-p/(4-p)},
\eqq
and 
\bqq
\int_{\R^2} |\nabla u_b|^2 \le Cb^{-2/(4-p)}.
\eqq
We always denote by $C\ge 1$ a general constant independent of $b$.
\end{lemma}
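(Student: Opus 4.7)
The plan is to combine the upper bound of Lemma~\ref{lem:Energy upper bound} with the Gagliardo--Nirenberg inequality and the singular-potential bound of Lemma~\ref{lem:4}, both applied to the minimizer $u_b$ itself.

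First I would dispose of the two easy inequalities in the chain. For $E(b)\le -C^{-1}b^{-p/(4-p)}$, Lemma~\ref{lem:Energy upper bound} gives $b^{p/(4-p)}E(b)\le \inf_{\lambda>0}(\lambda^4-\lambda^p A)$ with $A=\int_{\R^2}|Q_0(x)|^2/|x|^p\,\d x>0$. Since $0<p<4$, this infimum is attained at $\lambda_*=(pA/4)^{1/(4-p)}$ and equals $A(p-4)(pA/4)^{p/(4-p)}/4$, which is a strictly negative constant depending only on $p$. For the middle inequality $E(b)\ge \int V|u_b|^2$, I would use $a=a^*$ and the Gagliardo--Nirenberg bound \eqref{eq:GN} on $u_b$ to drop $\int|\nabla u_b|^2-\tfrac{a^*}{2}\int|u_b|^4\ge 0$ and $b(\int|\nabla u_b|^2)^2\ge 0$ from $\cE_b(u_b)$.

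The remaining two bounds require more care. Writing $K:=\int_{\R^2}|\nabla u_b|^2$, the Gagliardo--Nirenberg step in fact gives the stronger identity $bK^2+\int V|u_b|^2\le E(b)\le 0$. Applied to $u_b$, Lemma~\ref{lem:4} reads $\int V|u_b|^2\ge -\eps K-C_p\eps^{-p/(2-p)}$ for every $\eps>0$. Combining these yields the quadratic inequality
\[
bK^2-\eps K\le C_p\,\eps^{-p/(2-p)},\qquad \text{for all }\eps>0.
\]
The decisive choice is $\eps=b^{(2-p)/(4-p)}$, because then $\eps^{-p/(2-p)}=b^{-p/(4-p)}$ and, after introducing the rescaled unknown $\mu:=b^{2/(4-p)}K$, a short computation shows $bK^2=b^{-p/(4-p)}\mu^2$ and $\eps K=b^{-p/(4-p)}\mu$. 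The inequality therefore collapses to $\mu^2-\mu\le C_p$, so $\mu$ is bounded by an absolute constant and hence $K\le C b^{-2/(4-p)}$. Substituting this kinetic bound back into $\int V|u_b|^2\ge -\eps K-C_p\eps^{-p/(2-p)}$ with the same $\eps$ delivers $\int V|u_b|^2\ge -Cb^{-p/(4-p)}$.

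The only real subtlety is the exponent bookkeeping in the choice $\eps=b^{(2-p)/(4-p)}$. Once one anticipates from Lemma~\ref{lem:Energy upper bound} that the natural kinetic scale is $K\sim b^{-2/(4-p)}$, this choice is the unique one that makes both terms on the right-hand side of the Lemma~\ref{lem:4} inequality homogeneous of order $b^{-p/(4-p)}$, so that the quadratic in $K$ becomes dimensionally balanced and yields a constant bound on $\mu$. No sharp constants are needed at this stage, which is consistent with the lemma being a preparatory rough estimate.
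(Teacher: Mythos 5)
Your proposal is correct and follows essentially the same route as the paper: both rest on the upper bound from Lemma~\ref{lem:Energy upper bound}, the Gagliardo--Nirenberg inequality applied to $u_b$, and Lemma~\ref{lem:4} with the choice $\eps=b^{(2-p)/(4-p)}$. The only (cosmetic) difference is that you absorb the cross term by solving the quadratic inequality $\mu^2-\mu\le C_p$ directly, whereas the paper handles $\frac{b}{2}K^2-2\eps K$ via the AM--GM inequality; both yield the same bounds.
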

\begin{proof}
From Lemma \ref{lem:Energy upper bound} and the Gagliardo-Nirenberg inequality \eqref{eq:GN}, we obtain
\bq\label{eqfirst:lem}
-C^{-1} b ^{-p/(4-p)}\ge E(b)\ge \int_{\R^2} V |u_b|^2.
\eq
From Lemma \ref{lem:4} and \eqref{eq:GN}, we have 
\begin{align*}
E(b)+\int_{\R^2} V|u_b|^2 &= \int_{\R^2} |\nabla u_b|^2+b\left(\int_{\R^2} |\nabla u_b|^2\right)^2+2\int_{\R^2} V|u_b|^2-\frac{a^*}{2}\int_{\R^2} |u_b|^4 \\
&\ge b\left(\int_{\R^2} |\nabla u_b|^2\right)^2-C\eps^{-p/(2-p)}-2\eps \int_{\R^2} |\nabla u_b|^2. 
\end{align*}
Moreover, by AM-GM inequality, we have 
\bqq
\frac{b}{2}\left(\int_{\R^2} |\nabla u_b|^2\right)^2 - 2\eps \int_{\R^2} |\nabla u_b|^2 \ge -\frac{2\eps^2}{b}.
\eqq
Thus, from the above estimates, we obtain
\bqq
E(b)+\int_{\R^2} V|u_b|^2 \ge \frac{b}{2}\left(\int_{\R^2} |\nabla u_b|^2\right)^2 -\frac{2\eps^2}{b}-C\eps^{-p/(2-p)}.
\eqq
Using this with $\eps=b^{(2-p)/(4-p)}$, we obtain
\bq\label{eqlast:lem5}
E(b)+\int_{\R^2} V|u_b|^2 \ge \frac{b}{2}\left(\int_{\R^2} |\nabla u_b|^2\right)^2 -C b^{-p/(4-p)}.
\eq
From \eqref{eqfirst:lem} and \eqref{eqlast:lem5}, we conclude
\bqq
\int_{\R^2} V |u_b|^2 \ge -Cb^{-p/(4-p)} ~~ \text{and} ~~ \int_{\R^2} |\nabla u_b|^2 \le Cb^{-2/(4-p)}.
\eqq
\end{proof}

%%%%%%%%%%%%%%%%%%%%%%%%%%%%%%%%%%%%%%%%%%%
%%%%%%%%%%%%%%%%%%%%%%%%%%%%%%%%%%%%%%%%%%%
\begin{proof}[Proof of Theorem \ref{thm:2}] 
We will denote $\eps_b=b^{1/(4-p)}$. Note that $\eps_b\to 0$ when $b\to 0$.

{\bf Step 1: Extracting the limit.} 
From Lemma \ref{lem:ev} we have
\bq\label{eq1:thm2}
\eps_b^2 \int_{\R^2}|\nabla u_b|^2 \le C ~~ \text{and}~~ \eps_b^p \int_{\R^2}V|u_b|^2 \le -C^{-1}.
\eq
Define $w_b(x)=\eps_bu_b(\eps_b x)$. We have $w_b\in H^1(\R^2)$ and $\|w_b\|_{L^2}=1$. Moreover, from \eqref{eq1:thm2}, we have 
\bqq
\int_{\R^2}|\nabla w_b|^2= \eps_b^2 \int_{\R^2}|\nabla u_b|^2 \le C.
\eqq
Thus $w_b$ is bounded in $H^1(\R^2)$. By Sobolev's embedding, after passing to a subsequence if necessary, we can assume that $w_b$ converges to a function $w$ weakly in $H^1(\R^2)$ and pointwise. 

Since $|w_b|^2 \rightharpoonup |w|^2$ weakly in $L^s(\R^2)$ for all $1<s<\infty$ and $|x|^{-p}=|x|^{-p}\chi_{\{|x|\le 1\}}+|x|^{-p}\chi_{\{|x|\ge 1\}}\in L^{1/p+1/2}+L^{4/p}$, \eqref{eq1:thm2} implies that
\bqq
C^{-1}\le \eps_b^p \int_{\R^2}\frac{|u_b(x)|^2}{|x|^p}\d x=\int_{\R^2}\frac{|w_b(x)|^2}{|x|^p}\d x \to \int_{\R^2}\frac{|w(x)|^2}{|x|^p} \d x.
\eqq
We conclude $w \not\equiv 0$.

{\bf Step 2: Relating $w$ and $Q_0$.} Next, we prove that $w$ is an optimizer for the Gagliardo-Nirenberg inequality \eqref{eq:GN}. From \eqref{eq:GN} and Lemma \ref{lem:ev}, we have
\bqq
-C^{-1}\eps_b^{-p}\ge E(b)\ge \int_{\R^2} V |u_b|^2 +b\left(\int_{\R^2} |u_b|^2\right)^2\ge -C\eps_b^{-p}.
\eqq
Since $\eps_b\to 0$ and $0<p<2$, we have
\begin{align}\label{eq1:step2}
0&= \lim_{b\to 0}\eps_b^2\left(E(b)-\int_{\R^2} V |u_b|^2 -b\left(\int_{\R^2} |u_b|^2\right)^2\right)\nn\\
&=\lim_{b\to 0}\eps_b^2\left(\int_{\R^2}|\nabla u_b|^2-\frac{a^*}{2}\int_{\R^2}|u_b|^4\right)\nn\\
&=\lim_{b\to 0}\left(\int_{\R^2}|\nabla w_b|^2-\frac{a^*}{2}\int_{\R^2}|w_b|^4\right).
\end{align}
Since $w_b$ converges to $w$ weakly in $H^1(\R^2)$, we have 
\bq\label{eq2:step2}
\int_{\R^2}|\nabla w_b|^2=\int_{\R^2}|\nabla w|^2+\int_{\R^2}|\nabla (w_b-w)|^2 +o(1)_{b\to 0}.
\eq
From Brezis-Lieb's lemma \cite{BreLie-83}, we have
\bq\label{eq3:step2}
\int_{\R^2}| w_b|^4=\int_{\R^2}| w|^4+\int_{\R^2}|w_b-w|^4 +o(1)_{b\to 0}.
\eq
Combining \eqref{eq1:step2},\eqref{eq2:step2},\eqref{eq3:step2}, we obtain
\bq\label{eq4:step2}
0=\lim_{b\to 0} \left(\int_{\R^2}|\nabla w|^2+\int_{\R^2}|\nabla (w_b-w)|^2 - \frac{a^*}{2}\int_{\R^2}| w|^4-\frac{a^*}{2}\int_{\R^2}|w_b-w|^4\right).
\eq
On the other hand, by the Gagliardo-Nirenberg inequality \eqref{eq:GN}, we have 
\bq\label{eq5:step2}
\int_{\R^2}|\nabla w|^2- \frac{a^*}{2}\int_{\R^2}| w|^4\ge (1-\|w\|_{L^2}^2)\int_{\R^2} |\nabla w|^2
\eq
and 
\bq\label{eq6:step2}
\int_{\R^2}|\nabla (w_b-w)|^2- \frac{a^*}{2}\int_{\R^2}| w_b-w|^4\ge (1-\|w_b-w\|_{L^2}^2)\int_{\R^2} |\nabla (w_b-w)|^2.
\eq
Moreover, from Brezis-Lieb's lemma \cite{BreLie-83}, we have
\begin{align}\label{eq7:step2}
1=\int_{\R^2}| w_b|^2&=\int_{\R^2}| w|^2+\int_{\R^2}|w_b-w|^2 +o(1)_{b\to 0}\nn\\
&=\|w\|_{L^2}^2+\|w_b-w\|_{L^2}^2+o(1)_{b\to 0}.
\end{align}
Combining \eqref{eq6:step2}, \eqref {eq7:step2} and the fact that $\|\nabla(w_b-w)\|_{L^2}$ is bounded (since $w_b$ is bounded in $H^1(\R^2)$), we have 
\bq\label{eq8:step2}
\int_{\R^2}|\nabla (w_b-w)|^2- \frac{a^*}{2}\int_{\R^2}| w_b-w|^4 \ge \|w\|_{L^2}^2\int_{\R^2} |\nabla (w_b-w)|^2+o(1)_{b\to 0}.
\eq
Therefore, from \eqref{eq4:step2},\eqref{eq5:step2},\eqref{eq8:step2}, we have 
\bq\label{eq9:step2}
\limsup_{b\to 0} \left((1-\|w\|_{L^2}^2)\int_{\R^2} |\nabla w|^2+\|w\|_{L^2}^2\int_{\R^2} |\nabla (w_b-w)|^2\right)\le 0.
\eq
Since $\|w_b\|_{L^2}=1$ and $w_b \to w$ weakly in $L^2(\R^2)$, we have 
$\|w\|_{L^2} \le 1$. Moreover, $w \not\equiv 0$ by the conclusion in Step 1. Thus \eqref{eq9:step2} implies that $\|w\|_{L^2}=1$ and $\|\nabla(w_b-w)\|_{L^2}\to 0$. Therefore, $w_b\to w$ strongly in $H^1(\R^2)$. Combining this and \eqref{eq1:step2}, we obtain that $w$ is an optimizer for the Gagliardo-Nirenberg inequality \eqref{eq:GN}.

Since $Q_0$ is the unique optimizer for \eqref{eq:GN} up to translations and dilations, we conclude that 
\bqq
w=\beta Q_0(\beta x-x_0),
\eqq
for some $\beta>0$ and $x_0\in \R^2$. The values of $\beta$ and $x_0$ will be determined below.

{\bf Step 3: Energy low bound.} We have
\bq\label{eq2:step3}
\int_{\R^2} V|u_b|^2=-\int_{\R^2}\frac{|u_b(x)|^2}{|x|^p} \d x = -\eps_b^{-p}\int_{\R^2} \frac{|w_b(x)|^2}{|x|^p} \d x.
\eq
By H\"older's and Sobolev's inequalies, we have
\begin{align} \label{eq1:step3}
\left|\int_{\R^2} \frac{|w_b|^2-|w|^2}{|x|^p}\right|&\le \left(\int_{\R^2} \frac{|w_b-w|^2}{|x|^p}\right)^{1/2}\left(\int_{\R^2} \frac{|w_b+w|^2}{|x|^p}\right) ^{1/2} \nn\\
&\le C\|w_b-w\|_{H^1}\|w_b+w\|_{H^1}.
\end{align}
Since $w_b\to w$ strongly in $H^1(\R^2)$ and $\|w_b+w\|_{H^1}$ is bounded, we have 
\bqq
C\|w_b-w\|_{H^1}\|w_b+w\|_{H^1}\to 0.
\eqq
Thus \eqref{eq1:step3} implies that 
\bq \label{eq3:step3}
\int_{\R^2} \frac{|w_b(x)|^2}{|x|^p}\d x=\int_{\R^2} \frac{|w(x)|^2}{|x|^p}\d x+o(1)_{b\to 0}.
\eq
Moreover, we have
\bq\label{eq4:step3}
\int_{\R^2}|\nabla u_b|^2=\eps_b^{-2}\int_{\R^2}|\nabla w_b|^2=\eps_b^{-2}\left(\int_{\R^2}|\nabla w|^2+o(1)_{b\to 0}\right).
\eq
From \eqref{eq2:step3},\eqref{eq3:step3},\eqref{eq4:step3} and the Gagliardo-Nirenberg inequality \eqref{eq:GN}, we obtain
\begin{align}\label{eq5:step3} 
E(b)&=\int_{\R^2} |\nabla u_b|^2  + b \left( \int_{\R^2} |\nabla u_b|^2  \right)^2 + \int_{\R^2} V |u_b|^2  - \frac{a^{*}}{2} \int_{\R^2} |u_b|^4\nn\\
&\ge b\eps_b^{-4} \left(\int_{\R^2}|\nabla w|^2+o(1)_{b\to 0}\right)^2 -\eps_b^{-p}\left(\int_{\R^2} \frac{|w(x)|^2}{|x|^p}\d x+o(1)_{b\to 0}\right).
\end{align}
Recall that $\eps_b=b^{1/(4-p)}$. Thus $b\eps_b^{-4}=\eps_b^{-p}=b^{-p/(4-p)}$. Combining this with \eqref{eq5:step3}, we conclude 
\bq\label{eq6:step3} 
\liminf_{b\to 0}b^{p/(4-p)}E(b)\ge \left(\int_{\R^2}|\nabla w|^2\d x\right)^2-\int_{\R^2} \frac{|w(x)|^2}{|x|^p}\d x.
\eq

{\bf Step 4: Conclusion.} Now we use $w(x)=\beta Q_0(\beta x-x_0)$. From \eqref{eq:Q0} and \eqref{eq6:step3}, we have
\bqq
\liminf_{b\to 0}b^{p/(4-p)}E(b)\ge \beta^4-\beta^p\int_{\R^2} \frac{|Q_0(x-x_0)|^2}{|x|^p}\d x.
\eqq
Moreover, recall that $Q_0$ is radially symmetric decreasing and $\frac{1}{|x|^p}$ is radially symmetric strictly decreasing. Thus, by well-know rearrangement inequalities, see e.g. \cite[Chapter 3.4]{LieLos-01}, we deduce that
\bqq
\int_{\R^2}\frac{|Q_0(x-x_0)|^2}{|x|^p}\d x\le \int_{\R^2}\frac{|Q_0(x)|^2}{|x|^p},
\eqq
with equality if and only if $x_0=0$. Thus
\bqq
\liminf_{b\to 0}b^{p/(4-p)}{E(b)}\ge \beta^4-\beta^p\int_{\R^2} \frac{|Q_0(x)|^2}{|x|^p}\d x.
\eqq
On the other hand, we have proved in Lemma \ref{lem:Energy upper bound} that 
\bqq
b^{p/(4-p)}E(b)\le \inf_{\lambda>0}\left(\lambda^4-\lambda^p\int_{\R^2}\frac{|Q_0(x)|^2}{|x|^p}\d x\right),~~\forall b>0.
\eqq
Therefore, we conclude that
\bqq
\lim_{b\to 0}b^{p/(4-p)}E(b)= \inf_{\lambda>0}\left(\lambda^4-\lambda^p\int_{\R^2}\frac{|Q_0(x)|^2}{|x|^p}\d x\right).
\eqq
Moreover $x_0=0$ and $\beta$ is the optimal value in
\begin{align*}
&\inf_{\lambda>0}\left(\lambda^4-\lambda^p\int_{\R^2}\frac{|Q_0(x)|^2}{|x|^p}\d x\right)\\
&=\left(\int_{\R^2}\frac{|Q_0(x)|^2}{|x|^p}\d x\right)^{4/(4-p)}\left[\left(\frac{p}{4}\right)^{4/(4-p)}-\left(\frac{p}{4}\right)^{p/(4-p)}\right].
\end{align*}
This means 
\bqq
\beta= \left(\frac{p}{4} \int_{\R^2} \frac{|Q_0(x)|^2}{|x|^p} \d x\right)^{1/(4-p)}.
\eqq
The proof of Theorem \ref{thm:2} is finished.
\end{proof}

Finally, we provide:
\begin{proof}[Proof of Theorem \ref{thm:3}]  We have proved in Lemma \ref{lem:Energy upper bound} that 
\bqq
bE(b)\le -\frac{1}{4}\left(\frac{a}{a^*}-1\right)^2,~~ \forall b>0,a>a^*.
\eqq
Therefore, it remains to show that
\bqq
\liminf \limits_{b\to 0} bE(b) \ge -\frac{1}{4}\left(\frac{a}{a^*}-1\right)^2.
\eqq
Let $u_b$ be the minimizer for $E(b)$. By Lemma \ref{lem:4}
, we have 
\bqq
\eps\int_{\R^2} |\nabla u_b|^2+\int_{\R^2} V |u_b|^2 \ge -C_\eps.
\eqq
Thus 
\bqq
E(b) \ge  b \left( \int_{\R^2} |\nabla u_b|^2 \right)^2 + (1-\eps) \int_{\R^2} |\nabla u_b|^2 - \frac{a}{2}\int_{\R^2} |u_b|^4 -C_\eps .
\eqq
Combining this with \eqref{eq:GN}, we obtain
\bqq
E(b)\ge b \left( \int_{\R^2} |\nabla u_b|^2 \right)^2 - \left(\frac{a}{a^*} +\eps -1\right) \int_{\R^2} |\nabla u_b|^2 - C_\eps. 
\eqq
By completing the square, we find that 
$$
E(b) \ge  -\frac{1}{4b} \left(\frac{a}{a^*} +\eps -1\right)^2 - C_\eps.  
$$
Thus 
\bqq
\liminf_{b\to 0}bE(b) \ge -\frac{1}{4} \left(\frac{a}{a^*} +\eps -1\right)^2.
\eqq
Since this holds for all $\eps>0$, we obtain that 
\bqq
\liminf \limits_{b\to 0} bE(b) \ge -\frac{1}{4}\left(\frac{a}{a^*}-1\right)^2.
\eqq
The proof of Theorem \ref{thm:3} is complete.
\end{proof}

\end{document}